\newtheorem{example}[theorem]{\protect\examplename}
\providecommand{\examplename}{Example}
\providecommand{\remarkname}{Remark}
\newtheorem{assumption}[theorem]{\protect\assumptionname}
\providecommand{\assumptionname}{Assumption}
\newcommand{\ee}{\text{e}}
\newcommand{\dd}{\text{d}}
\newcommand{\wh}{\widehat}
\newcommand{\wt}{\widetilde}
\newcommand{\tcone}{\raisebox{.5pt}{\textcircled{\raisebox{-.9pt} {\small 1}}}}
\newcommand{\tctwo}{\raisebox{.5pt}{\textcircled{\raisebox{-.9pt} {\small 2}}}}
\newcommand{\tcthree}{\raisebox{.5pt}{\textcircled{\raisebox{-.9pt} {\small 3}}}}
\newcommand{\msp}{\rule[0mm]{0mm}{10.5pt}}
\title{Overcoming order reduction in diffusion-reaction splitting. Part 2: oblique boundary conditions
}
\newcommand*\samethanks[1][\value{footnote}]{\footnotemark[#1]}
\author{
Lukas Einkemmer\thanks{Department of Mathematics, University of Innsbruck, Technikerstra\ss e 13, Innsbruck, Austria ({\tt lukas.einkemmer@uibk.ac.at}, {\tt alexander.ostermann@uibk.ac.at}).
} \and Alexander Ostermann\samethanks
}
\begin{document}

\maketitle

\slugger{sisc}{xxxx}{xx}{x}{x--x}

\begin{abstract}
Splitting methods constitute a well-established class of numerical schemes for the time integration of partial differential equations. Their main advantages over more traditional schemes are computational efficiency and superior geometric properties. In the presence of non-trivial boundary conditions, however, splitting methods usually suffer from order reduction and some additional loss of accuracy. For diffusion-reaction equations with inhomogeneous oblique boundary conditions, a modification of the classic second-order Strang splitting is proposed that successfully resolves the problem of order reduction. The same correction also improves the accuracy of the classic first-order Lie splitting. The proposed modification only depends on the available boundary data and, in the case of non Dirichlet boundary conditions, on the computed numerical solution. Consequently, this modification can be implemented in an efficient way, which makes the modified splitting schemes superior to their classic versions. The framework employed in our error analysis also allows us to explain the fractional orders of convergence that are often encountered for classic Strang splitting. Numerical experiments that illustrate the theory are provided.
\end{abstract}

\begin{keywords}
Strang splitting, diffusion-reaction equation, oblique boundary conditions, Neumann boundary conditions, Robin boundary conditions, mixed boundary conditions, order reduction
\end{keywords}

\begin{AMS}
65M20, 65M12, 65L04
\end{AMS}

\pagestyle{myheadings}
\thispagestyle{plain}
\markboth{L.~EINKEMMER, A.~OSTERMANN}{OVERCOMING ORDER REDUCTION IN SPLITTING METHODS II}

\section{Introduction}

For the numerical solution of evolution equations, splitting methods have attracted much attention in recent years. The main advantage of splitting methods over more traditional time integration schemes is the fact that the partial vector fields in the splitting can usually be integrated more efficiently, sometimes much more efficiently, compared to applying a numerical method to the full problem. Moreover, splitting methods have, in general, better geometric properties \cite{hairer2006}. As a typical example, we mention splitting of the Schr\"odinger equation \cite{faou12} into the kinetic and the potential part. The former is integrated in frequency space using the fast Fourier transform, whereas the latter can often be integrated exactly. This requires considerably less computational effort compared to directly applying, for example, an implicit Runge--Kutta method to the Schr\"odinger equation.

In this paper, we will consider a splitting approach for diffusion-reaction equations. In our problem, the diffusion is modelled by a linear elliptic differential operator and the reaction by a nonlinear smooth function. Such a splitting into a linear elliptic problem, which can be solved efficiently by fast Poisson techniques \cite{mckenney1995fast}, and a nonlinear ordinary differential equation, which is simply solved pointwise in space, has attracted much attention in the literature (see, e.g., the articles \cite{DW91,descombes01, estep08, gerisch2002, hansen2012} and the monograph \cite{hundsdorfer2003numerical}).

Although splitting methods are widely used in multiphysics problems, a rigorous error analysis is still missing for many important applications. For instance, the convergence analysis of splitting methods is often carried out either for periodic boundary conditions or for full space problems. On the other hand, it is known from numerical experiments that non-trivial boundary conditions lead to order reduction (see, e.g., the experiments in \cite{EO15overcoming, hansen2009}).

In order to avoid such an order reduction, the splitting must be modified in such a way that the internal steps of the splitting become compatible with the prescribed boundary conditions. For reaction-diffusion equations with Dirichlet boundary conditions, we presented such a correction in our recent paper \cite{EO15overcoming}. Here, we propose a modification of that idea which, on the one hand, is easier to implement, and on the other hand can be extended to more general oblique boundary conditions. The latter comprise Dirchlet, Neumann, Robin, and mixed boundary conditions.

To illustrate the underlying ideas of our approach, we consider for a moment the semilinear heat equation
\begin{equation}\label{eq:problem-orig}
\partial_t u = \Delta u + f(u), \quad \alpha u + \beta \partial_n u \vert_{\partial \Omega} = b.
\end{equation}
Depending on the choice of $\alpha$ and $\beta$, it is equipped with Dirichlet, Neumann, Robin, or a mixture of these boundary conditions.

The problem of order reduction in splitting methods has the following origin. When the right-hand side of the differential equation \eqref{eq:problem-orig} is split into the two parts $\Delta u$ and $f(u)$, the nonlinearity $f(u)$ does not satisfy, in general, the homogeneous boundary conditions $\alpha u + \beta \partial_n u \vert_{\partial \Omega}=0$ with $f(u)$ in place of $u$. However, these boundary conditions are essential as they characterize the domain of the Laplacian $\Delta$, which plays a paramount role in the error analysis. The noncompliance of the nonlinearity with the homogeneous boundary conditions is the main source of order reduction.

We therefore look for a function $q$ that satisfies the boundary conditions of the nonlinearity $f(u)$, that is
\begin{equation}
\alpha q + \beta \partial_n q \vert_{\partial \Omega} = \alpha f(u) + \beta f^{\prime}(u) \partial_n u \vert_{\partial \Omega},
\end{equation}
and define the splitting
\begin{equation}
\begin{aligned}
\partial_t v &= \Delta v + q, \quad \alpha u + \beta \partial_n u \vert_{\partial \Omega} = b\\
\partial_t w &= f(w) - q
\end{aligned}
\end{equation}
with the help of this correction. Then $\partial_t w$ satisfies the desired boundary conditions.

Note that $q$ still depends on the value of $u$ and $\partial_n u$ on the boundary. We can eliminate $\partial_n u$ from the relation by using $\beta \partial_n u \vert_{\partial \Omega} = b-\alpha u \vert_{\partial \Omega}$. This then yields
\[
\alpha q + \beta \partial_n q \vert_{\partial \Omega} = \alpha f(u) + f^{\prime}(u)(b-\alpha u) \vert_{\partial \Omega}.
\]
It is instructive to consider a few special cases. For Dirichlet boundary conditions ($\alpha=1$, $\beta=0$) we obtain the familiar relation
\[ q \vert_{\partial \Omega} = f(b) \]
which, in the notation of our previous paper \cite{EO15overcoming} constitutes two steps. First, we compute an extension $z$ of the boundary data, i.e. $z\vert_{\partial \Omega}= b$, and then $q=f(z)$ which (on the boundary) yields the same result. Note that $q$ is independent of $u$ and can be precomputed (for time independent boundary data $b$) in this case.

For Neumann boundary conditions ($\alpha=0$, $\beta=1$) we obtain
\[ \partial_n q \vert_{\partial \Omega} = f^{\prime}(u)b. \]
Note that in this case $q$ depends on $u\vert_{\partial \Omega}$. However, we can substitute $u$ by the numerical solution at the beginning of the time step and still obtain a second-order approximation, see section~\ref{sec:strang}.

In the general case, the continuation $q$ will thus be constraint by
\begin{equation}\label{eq:exact-boundary}
\alpha q(t_n) + \beta \partial_n q(t_n) \vert_{\partial \Omega} = \alpha f(u_n) + f^{\prime}(u_n)(b-\alpha u_n) \vert_{\partial \Omega},
\end{equation}
where $u_n$ denotes the numerical solution at time $t_n$.

The outline of the paper is as follows. In section \ref{sec:model} we present our model problem, a semilinear parabolic equation subject to oblique boundary conditions. The numerical scheme is explained in section \ref{sec:method}. With the help of a correction $q$ that has to satisfy a certain boundary condition, we define a so-called boundary-corrected splitting. Section~\ref{sec:error-analysis} addresses the error analysis of our modified splitting schemes. The convergence properties of the Lie splitting are analyzed in section~\ref{sec:lie}, that of the Strang splitting in section~\ref{sec:strang}. The main result in this context is Theorem~\ref{thm:strang} which proves second order convergence of the modified Strang splitting under natural assumptions on the data. The order reduction, which is observed for the classic Strang splitting, is further discussed and rigorously explained in section~\ref{sec:explain}. Some representative numerical results in one and two space dimensions are given in sections~\ref{sec:1d} and~\ref{sec:2d}, respectively. Issues of efficient implementation are finally discussed in section~\ref{sec:implement}.

We conclude this section by some remarks on the employed notation. The letter $C$ always denotes a generic constant. It may take on different numeric values at different occurrences. Most functions in our paper depend on time $t$ and space $x$. Whenever convenient, however, we suppress the second variable in the notation and simply write $g(t)$ instead of $g(t,\cdot)$. Further, we denote $g_n(s) = g(t_n+s)$ for $t_n=n\tau$, where $\tau$ is the time step size of the numerical method. The very exception to this rule is the exact solution of the problem, denoted by $u$. Here, the symbol $u_n$ always denotes the numerical approximation to $u(t)$ at time $t=t_n$.

\section{Model problem}
\label{sec:model}

The purpose of this section is to present a model problem for which a rigorous error analysis of Lie and Strang splitting will be provided. On the one hand, the problem should be simple enough so that all steps in the proof can be presented in detail. On the other hand, it should be sufficiently general to contain interesting problem classes. We choose a class of semilinear parabolic problems subject to oblique boundary conditions.

Let $\Omega\subset \mathbb R^d$ be an open, bounded subset with smooth boundary $\partial \Omega$. We consider a second-order elliptic differential operator
\begin{equation}\label{eq:diff-op}
D = \sum_{i,j=1}^d d_{ij}(x)\partial_{ij} + \sum_{i=1}^d d_i(x)\partial_i + d_0(x)I
\end{equation}
with smooth coefficients. The matrix $(d_{ij}(x))$ is assumed to be symmetric and uniformly positive definite on $\Omega$. Further, we consider the first-order boundary operator%
\begin{equation}
B = \sum_{i=1}^d \beta_i(x)\partial_i + \alpha(x)I
\end{equation}
with sufficiently smooth coefficients. The operator $B$ is assumed to satisfy the uniform non tangentiality condition
\begin{equation}\label{eq:nontang}
\inf_{x\in\partial\Omega}\left|\textstyle\sum_{i=1}^d \beta_i(x) n_i(x) \right| >0.
\end{equation}
Here, $n(x)$ denotes the outer normal of $\Omega$ at $x\in \partial\Omega$. We further assume that $f$ is a sufficiently smooth real function. For more details concerning the functional analytic framework, we refer to \cite[Sect.~3.1]{lunardi95}.

Henceforth, we consider the following semilinear parabolic problem with oblique boundary conditions
\begin{subequations}\label{eq:pde2}
\begin{align}
\partial_t u &= Du + f(u)\\
Bu|_{\partial\Omega} &= b\\
u(0) &= u_0
\end{align}
\end{subequations}
as the model problem in our analysis. The nonlinearity $f$, the inhomogeneity $b$ and the initial data $u_0$ are assumed to be sufficiently smooth. Depending on the choice of coefficients in $B$, different boundary conditions are modelled. For example, the choice
$$
\beta_1 = \ldots = \beta_s=0
$$
gives a Dirichlet problem, whereas the choice
$$
\alpha = 0,\qquad \beta_i(x) = \textstyle\sum_{j=1}^d d_{ij}(x) n_j(x), \quad 1\le i\le d
$$
corresponds to a Neumann problem. Note that \eqref{eq:nontang} is satisfied in both of these cases.

\section{Description of the numerical method}
\label{sec:method}

We next describe how to carry out one splitting step in the time interval $[t_n,t_{n+1}]$ of length $\tau$. As motivated in the introduction, we choose as correction a smooth function $q$ that satisfies the boundary conditions of $f(u)$. Actually, it is sufficient to require (as we will see later)
\begin{equation}\label{eq:bd-comp}
Bq_n(0)|_{\partial\Omega} = Bf(u(t_n))|_{\partial\Omega} + \mathcal O(\tau).
\end{equation}
Since our numerical methods converge at least with order one, we can simply take
\begin{equation}\label{eq:bd-choice}
Bq_n|_{\partial\Omega} = \alpha f(u_n) + f'(u_n)\bigl( b_n -\alpha u_n \bigr)|_{\partial\Omega},
\end{equation}
as this choice then satisfies~\eqref{eq:bd-comp}. With this correction $q_n$ at hand, we consider the boundary-corrected splitting
\begin{subequations}\label{eq:split}
\begin{align}
\partial_t v_n &= Dv_n + q_n,\qquad Bv_n|_{\partial\Omega} =b_n \label{eq:split-v}\\
\partial_t w_n &= f(w_n)-q_n,\label{eq:split-w}
\end{align}
\end{subequations}
and solve it on the time interval $[t_n,t_{n+1}]$ by the standard Lie or Strang approach.

In the case of Lie splitting, for a given initial value $u_n$, we first\footnote{We note that the order of the two partial flows can also be interchanged. The resulting scheme can be analyzed in a similar way, see also Lemma~\ref{lem:lie-rev} below.} solve \eqref{eq:split-w} with initial value $w_n(0)=u_n$ to obtain $w_n(\tau)$. Then, we integrate \eqref{eq:split-v} with initial value $v_n(0)=w_n(\tau)$ and finally define the numerical solution $u_{n+1}$ at time $t_{n+1}$ as
\begin{equation}\label{eq:split-lie}
u_{n+1}=\mathcal L_\tau u_n = v_n(\tau).
\end{equation}
Henceforth, the numerical scheme \eqref{eq:split-lie} with operator $\mathcal L_\tau$ will be referred to as the \emph{modified} Lie splitting step of size $\tau$.

In the case of Strang splitting, for a given initial value $u_n$, we first solve \eqref{eq:split-v} with initial value $v_n(0)=u_n$ to obtain $v_n(\tfrac{\tau}2)$. Next, we integrate \eqref{eq:split-w} with initial value $w_n(0)=v_n(\tfrac{\tau}2)$ to obtain $w_n(\tau)$. Finally, we integrate once more \eqref{eq:split-v}, but this time with initial value $v_n(0)=w_n(\tau)$, and define the numerical solution $u_{n+1}$ at time $t_{n+1}$ as
\begin{equation}\label{eq:split-strang}
u_{n+1}=\mathcal S_\tau u_n = v_n(\tfrac{\tau}2).
\end{equation}
The numerical scheme \eqref{eq:split-strang} with operator $\mathcal S_\tau$ will be referred to as the \emph{modified} Strang splitting step of size $\tau$ henceforth.

Note that the classic Lie and Strang splitting schemes can formally also be analyzed in the above framework, simply by setting $q_n=0$. We will make use of that later on. For the choice $q_n=0$, however, condition \eqref{eq:bd-comp} is not satisfied, in general.

\section{Error analysis}
\label{sec:error-analysis}

The purpose of this section is to give a thorough error analysis of Lie and Strang splitting applied to \eqref{eq:pde2}. Our analysis will be based on the framework of analytic semigroups, see \cite{pazy83, henry81, lunardi95}. For the purpose, we first have to transform the partial differential equation \eqref{eq:pde2} to homogeneous boundary conditions. Note, however, that this transformation is only employed in the proof and will not affect our numerical methods.

Let $z$ denote a smooth function that fulfills the boundary conditions
\begin{equation}\label{eq:zbd}
Bz|_{\partial\Omega} = b,
\end{equation}
and let $\wh u = u -z$. Then $\wh u$ satisfies the abstract initial value problem
\begin{equation}\label{eq:abstract}
\partial_t \wh u = A\wh u + f(\wh u + z) + Dz - \partial_t z, \quad \wh u(0) = u(0) - z(0),
\end{equation}
where $A$ is the infinitesimal generator of an analytic semigroup on the Banach space $L^p(\Omega)$. Its domain
$$
\mathcal D(A) = \{ \psi\in W^{2,p}(\Omega) \mid B\psi|_{\partial\Omega}=0\} \subset L^p(\Omega)
$$
incorporates the homogeneous boundary conditions of the problem. For functions $\psi\in \mathcal D(A)$, the action of $A$ is defined by $A\psi = D\psi$. For details, we again refer to \cite[Sect.~3.1]{lunardi95}.

For later use we recall that analytic semigroups enjoy the parabolic smoothing property, i.e.
\begin{equation}\label{eq:para-smooth}
\Vert \ee^{t A}(-A)^\gamma \Vert \leq Ct^{-\gamma},\quad\gamma\ge 0
\end{equation}
for all $t\in(0,T]$. Without any loss of generality, we assumed here that $A$ is invertible with a bounded inverse. This can always be achieved by an appropriate scaling of $u$.

Let $\varphi_k$ denote the entire functions, recursively defined by
$$
\varphi_{k+1}(z) = \frac{\varphi_k(z)-\frac1{k!}}z,\quad k\ge 0,\qquad \varphi_0(z)=\ee^z.
$$
From the integral representation
\begin{equation}\label{eq:varphi-int}
\varphi_k(z) = \int_0^1 \! \ee^{(1-s)z}\frac{s^{k-1}}{(k-1)!}\,\dd s,\quad k\ge 1,
\end{equation}
we infer that the operators $\varphi_k(tA)$ are uniformly bounded for $0\le t \le T$. Moreover, the following relation holds
\begin{equation}\label{eq:varphi-rec}
\ee^{\tau A} - \varphi_1(\tau A) = \tau A \bigl( \varphi_1(\tau A) - \varphi_2(\tau A)\bigr),
\end{equation}
which will be employed later.

We still have to formulate an appropriate framework to include the nonlinearity $f$. For simplicity, we want to use the same norm for the analysis of \eqref{eq:split-v} and~\eqref{eq:split-w}. This, however, implies that $f$ must be considered as a smooth function on $L^p(\Omega)$, defined in a neighborhood of the exact solution with values in $L^p(\Omega)$. This seems to be quite a restrictive assumption. However, we can use the well-known fact that the norm is differentiable in $L^p(\Omega)\setminus\{0\}$ for $1<p<\infty$ and twice differentiable for $2\le p<\infty$, see \cite[Sect.~2.2]{sand85}. Therefore, $f(u)$ can be replaced in the analysis by $\chi(\|u\|) f(u)$, where $\chi$ is a suitably chosen real smooth cut-off function. Without further mention, we will follow this approach.

Still another possibility would be to consider $f:V\to L^p(\Omega)$, where $V$ denotes the fractional power space $V=\mathcal D((-A)^\gamma)$ for some $0\le \gamma < 1$. This framework is used often in the literature, for example in \cite{henry81}. For simplicity, however, we will not consider it further here.

The exact solution of the abstract problem~\eqref{eq:abstract} can be expressed by the variation-of-constants formula
$$
\wh u(t) = \ee^{\tau A}\wh u(0) + \int_0^t \ee^{(t-s) A}\Bigl(f(\wh u(s)+z(s)) + Dz(s) -\partial_t z(s)\Bigr) \dd s.
$$
This allows us to express the exact solution of \eqref{eq:pde2} at time $t_{n+1}=t_n+\tau$ in the following way:
\begin{equation}\label{eq:exact-sol}
\begin{aligned}
u(t_{n+1}) &= z_n(\tau) + \ee^{\tau A}\bigl(u(t_n)-z_n(0)\bigr) \\
&\qquad + \int_0^\tau \ee^{(\tau-s) A}\Bigl(f(u(t_n+s)) + Dz_n(s) -\partial_t z_n(s)\Bigr) \dd s,
\end{aligned}
\end{equation}
where $z_n(s)$ is a smooth function satisfying the boundary condition $Bz_n(0)|_{\partial\Omega}=b(t_n)$. For example, we can take $z_n(s)=z(t_n+s)$.

The convergence analysis is quite similar to that one carried out in our previous paper \cite{EO15overcoming} in the Dirichlet case. The main difference comes from the fact that the correction $q$ is now solution dependent, in general.

\subsection{Lie and modified Lie splitting}
\label{sec:lie}

We commence this section by studying the local error of Lie splitting applied to \eqref{eq:pde2}. For this purpose, we start the numerical solution at time $t_n$ with the initial value $\wt u_n = u(t_n)$ on the exact solution. We first carry out a substep of size $\tau$ with the vector field \eqref{eq:split-w}, which gives
\begin{equation}\label{eq:lie1}
w_n(\tau) = \wt u_n + \tau \bigl(f(\wt u_n) - q_n(0)\bigr) + \mathcal O (\tau^2),
\end{equation}
and then conclude the Lie step by integrating \eqref{eq:split-v} with initial value $v_n(0)=w_n(\tau)$ up to time $\tau$. This provides the sought-after numerical solution
\begin{equation}\label{eq:lie2}
\begin{aligned}
\mathcal L_\tau \wt u_n &= z_n(\tau) + \ee^{\tau A}\bigl(w_n(\tau)-z_n(0)\bigr) \\
&\qquad + \int_0^\tau \ee^{(\tau-s) A}\Bigl(q_n(s) + Dz_n(s) -\partial_t z_n(s)\Bigr) \dd s.
\end{aligned}
\end{equation}
The local error $\delta_{n+1} = \mathcal L_\tau \widetilde u_n - u(t_{n+1})$ is obtained by inserting \eqref{eq:lie1} into \eqref{eq:lie2} and subtracting \eqref{eq:exact-sol}. This gives the following representation of the local error
\begin{equation}\label{eq:locerr-lie}
\begin{aligned}
\delta_{n+1} &= \tau\ee^{\tau A}\bigl( f(\wt u_n) - q_n(0)\bigr)\\
&\qquad + \int_0^\tau \ee^{(\tau-s) A}\Bigl(q_n(s) - f(u(t_n+s))\Bigr) \dd s + \mathcal O(\tau^2).
\end{aligned}
\end{equation}
Expanding
$$
q_n(s) - f(u(t_n+s)) = q_n(0) - f(\wt u_n) + \mathcal O(\tau)
$$
we get
\begin{equation}\label{eq:loc-err-lie}
\begin{aligned}
\delta_{n+1} &= \tau\bigl(\ee^{\tau A}-\varphi_1(\tau A)\bigr)\bigl( q_n(0) - f(\wt u_n)\bigr) + \mathcal O(\tau^2)\\
&= \tau^2 A \bigl(\varphi_1(\tau A)-\varphi_2(\tau A)\bigr)\bigl( q_n(0) - f(\wt u_n)\bigr) + \mathcal O(\tau^2),
\end{aligned}
\end{equation}
where we used \eqref{eq:varphi-int} and \eqref{eq:varphi-rec}. Henceforth, we will employ the following assumption on the data of~\eqref{eq:pde2}.

\begin{assumption} \label{ass1}
Let the domain $\Omega$, the differential operators $D$ and $B$, and the inhomogeneity $d$ satisfy the assumptions of section~\ref{sec:model}, let $f$ be differentiable, and assume that $u_0$ is smooth and satisfies the boundary conditions.
\end{assumption}

Under these assumptions $A$ generates an analytic semigroup \cite[Sect.~3.1]{lunardi95}. Moreover, Proposition~7.1.10 in \cite{lunardi95} shows that the solution $u$ of \eqref{eq:pde2} is continuously differentiable.

\begin{theorem}[Convergence of the classic Lie splitting] \label{thm:lie}
Under Assumption~\ref{ass1}, the classic Lie splitting is convergent of order $\tau \left\vert \log \tau \right\vert$, i.e., the global error satisfies the bound
$$
\|u_n-u(t_n)\| \le C\tau (1+\left\vert \log \tau \right\vert), \qquad 0\le n\tau \le T,
$$
where the constant $C$ depends on $T$ but is independent of $\tau$ and $n$.
\end{theorem}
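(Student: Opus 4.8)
The plan is to specialize the already-derived local error representation \eqref{eq:loc-err-lie} to the classic splitting, which corresponds to $q_n\equiv 0$, and then to propagate these local errors through a discrete variation-of-constants argument in which the unbounded operator appearing in the local error is tamed by the parabolic smoothing \eqref{eq:para-smooth}. Setting $q_n=0$ in \eqref{eq:loc-err-lie} gives the leading local error
$$
\delta_{n+1} = -\tau^2 A\bigl(\varphi_1(\tau A)-\varphi_2(\tau A)\bigr) f(\wt u_n) + R_{n+1},\qquad \|R_{n+1}\|\le C\tau^2,
$$
where $\wt u_n=u(t_n)$ lies on the exact solution and the remainder is genuinely bounded, being produced by applying the bounded operators $\ee^{\tau A}$ and $\int_0^\tau \ee^{(\tau-s)A}(\cdot)\,\dd s$ to $\mathcal O(\tau)$ and $\mathcal O(\tau^2)$ quantities. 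The essential difficulty, which distinguishes this statement from the second-order result for the modified scheme, is that $f(\wt u_n)$ does \emph{not} satisfy the homogeneous boundary conditions and hence does not belong to $\mathcal D(A)$; consequently $A f(\wt u_n)$ is unbounded, the local error is only of size $\mathcal O(\tau)$, and a naive summation of local errors would merely yield an $\mathcal O(1)$ global bound.

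To overcome this, I would set up the global error $e_n=u_n-u(t_n)$ through the recursion obtained by subtracting the exact formula \eqref{eq:exact-sol} from the numerical one \eqref{eq:lie2}. Writing the $w$-substep as the exact flow $\Phi^\tau$ of $\partial_t w=f(w)$, one obtains $e_{n+1}=\ee^{\tau A}\bigl(\Phi^\tau u_n-\Phi^\tau u(t_n)\bigr)+\delta_{n+1}$. Since $f$ is Lipschitz on the relevant bounded set, $\Phi^\tau u_n-\Phi^\tau u(t_n)=e_n+g_n$ with $\|g_n\|\le C\tau\|e_n\|$, so that $e_{n+1}=\ee^{\tau A}e_n+\ee^{\tau A}g_n+\delta_{n+1}$. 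Solving this linear recursion with $e_0=0$ (Lady Windermere's fan) yields
$$
e_n=\sum_{j=0}^{n-1}\ee^{(n-1-j)\tau A}\ee^{\tau A} g_j+\sum_{j=0}^{n-1}\ee^{(n-1-j)\tau A}\delta_{j+1}.
$$
Using boundedness of the semigroup, the first sum is bounded by $C\tau\sum_j\|e_j\|$ and feeds the final Gronwall step, while the remainder parts $R_{j+1}$ of the second sum contribute $\sum_j\|R_{j+1}\|\le Cn\tau^2\le CT\tau$.

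The hard part is the accumulated leading contribution $-\sum_{j=0}^{n-1}\ee^{(n-1-j)\tau A}\tau^2 A(\varphi_1-\varphi_2)(\tau A) f(u(t_j))$. Using $(\varphi_1-\varphi_2)(\tau A)=\int_0^1\ee^{(1-s)\tau A}(1-s)\,\dd s$ from \eqref{eq:varphi-int}, each summand is $\tau^2\int_0^1 A\,\ee^{(n-j-s)\tau A}(1-s)\,\dd s$ applied to the uniformly bounded quantity $f(u(t_j))$. For $j\le n-2$ the exponent stays positive, and the smoothing bound $\|A\ee^{tA}\|\le C/t$ from \eqref{eq:para-smooth} with $\gamma=1$ gives a summand of norm $\le C\tau/(n-1-j)$; summing over $j$ produces the harmonic sum $\tau\sum_{k=1}^{n-1}1/k\le C\tau\log n\le C\tau(1+|\log\tau|)$ since $n\le T/\tau$. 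The single diagonal term $j=n-1$ cannot be smoothed, but it is estimated directly by $\|\tau^2 A(\varphi_1-\varphi_2)(\tau A)\|\le C\tau$, again inserting $\|A\ee^{tA}\|\le C/t$ inside the integral. Collecting all contributions gives $\|e_n\|\le C\tau\sum_{j=0}^{n-1}\|e_j\|+C\tau(1+|\log\tau|)$, and the discrete Gronwall lemma closes the estimate to the claimed bound. The main obstacle throughout is precisely the interplay in the harmonic sum between the factor $\tau^2$ gained from the local error and the factor $\tau^{-1}$ lost to the unbounded $A$, which is exactly what turns the expected clean first order into the $\tau|\log\tau|$ rate.
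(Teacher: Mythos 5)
Your proposal is correct and follows essentially the same route as the paper: specialize the local error representation \eqref{eq:loc-err-lie} to $q_n=0$, propagate through the global error recursion via a Lady Windermere's fan, and use the parabolic smoothing property \eqref{eq:para-smooth} to turn the unbounded factor $A$ in the accumulated local errors into the harmonic sum $\tau^2\sum_k (k\tau)^{-1}\le C\tau(1+|\log\tau|)$, closing with Gronwall. The paper states these steps tersely in a single sentence; your write-up merely fills in the details (the integral representation of $\varphi_1-\varphi_2$, the separate treatment of the diagonal term $j=n-1$, which corresponds to the paper's trailing $C\tau$) without deviating from its argument.
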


\begin{proof}
The global error $e_n = u_n - u(t_n)$ satisfies the recursion
\begin{equation}
e_{n+1} = \mathcal L_\tau u_n -\mathcal L_\tau \wt u_n + \delta_{n+1}
\end{equation}
which, by \eqref{eq:lie1} and \eqref{eq:lie2}, can be brought into the following form:
\begin{equation}
e_{n+1}= \ee^{\tau A}\Bigl( e_n + \tau \bigl(f(u_n) - f(\wt u_n)\bigr)\Bigr) + \delta_{n+1} + \mathcal O(\tau^2).
\end{equation}
Solving this recursion, using the local Lipschitz continuity of $f$, the very form of the local errors \eqref{eq:loc-err-lie} with $q_n=0$, and the parabolic smoothing property \eqref{eq:para-smooth}, we get
\[
\Vert e_n \Vert \leq C\|e_0\| + C\tau \sum_{k=0}^{n-1} \Vert e_k \Vert + C \tau^2 \sum_{k=1}^{n-1} \frac{1}{k \tau} + C\tau.
\]
Note that the third term on the right-hand side can by estimated by $C \tau (1+\left\vert \log \tau \right\vert)$. This, together with Gronwall's inequality and $\|e_0\|=0$ gives the desired bound.
\end{proof}

Under the slightly stronger assumption
\begin{equation}\label{eq:bd-relax}
q_n(0) - f(u(t_n)) \in\mathcal D((-A)^\gamma) \qquad \text{for some $0 <\gamma \le 1$},
\end{equation}
the $\log \tau$ term in the above theorem can be omitted. This follows at once from the representation of the local errors
$$
\delta_{n+1} = -\tau^2(-A)^{1-\gamma} \bigl(\varphi_1(\tau A)-\varphi_2(\tau A)\bigr)(-A)^\gamma\bigl( q_n(0) - f(u(t_n))\bigr) + \mathcal O(\tau^2).
$$
For the oblique boundary conditions considered in this paper, we can choose any $\gamma <\tfrac1{2p}$, see~\cite{grisvard1967}.

In particular, for the modified Lie splitting (which is equivalent to setting $\gamma = 1$ in \eqref{eq:bd-relax}) we get the following result.
\begin{theorem}[Convergence of the modified Lie splitting] \label{thm:lie-corr}
Under Assumption~\ref{ass1}, the modified Lie splitting with $q_n$ satisfying \eqref{eq:bd-choice} is convergent of order one.
\end{theorem}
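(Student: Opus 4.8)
The plan is to follow the same three steps as in the proof of Theorem~\ref{thm:lie}---a local error bound, a stable error recursion, and a discrete Gronwall argument---but to exploit the boundary correction in order to upgrade the local error to a clean $\mathcal{O}(\tau^2)$. The single new ingredient is that the choice \eqref{eq:bd-choice} forces the defect $q_n(0)-f(u(t_n))$ into the domain of $A$. To see this, I would first write out $B$ applied to $f(u)$: using $f'(u)\sum_{i}\beta_i\partial_i u = f'(u)(Bu-\alpha u)$ together with $Bu|_{\partial\Omega}=b$ gives $Bf(u)|_{\partial\Omega}=\alpha f(u)+f'(u)(b-\alpha u)|_{\partial\Omega}$. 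Evaluated on the exact solution $\wt u_n=u(t_n)$, this is exactly the right-hand side of \eqref{eq:bd-choice}, so $B\bigl(q_n(0)-f(u(t_n))\bigr)|_{\partial\Omega}=0$. Since $f$ is smooth and, by Assumption~\ref{ass1} together with Proposition~7.1.10 in \cite{lunardi95}, $u(t_n)$ is smooth, both $q_n(0)$ and $f(u(t_n))$ lie in $W^{2,p}(\Omega)$; hence the defect lies in $\mathcal{D}(A)=\mathcal{D}((-A)^1)$, which is precisely the case $\gamma=1$ of \eqref{eq:bd-relax}.

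With this the local error bound is immediate. Setting $\gamma=1$ in the representation
\[
\delta_{n+1}=-\tau^2(-A)^{1-\gamma}\bigl(\varphi_1(\tau A)-\varphi_2(\tau A)\bigr)(-A)^\gamma\bigl(q_n(0)-f(u(t_n))\bigr)+\mathcal{O}(\tau^2)
\]
collapses the fractional power $(-A)^{1-\gamma}$ to the identity and leaves the factor $(-A)\bigl(q_n(0)-f(u(t_n))\bigr)$, which by the previous paragraph and the smoothness of the data is bounded uniformly in $n$ for $n\tau\le T$. As the operators $\varphi_1(\tau A)-\varphi_2(\tau A)$ are uniformly bounded by \eqref{eq:varphi-int}, I conclude $\|\delta_{n+1}\|\le C\tau^2$.

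It then remains to globalize. I would reuse the recursion derived in the proof of Theorem~\ref{thm:lie},
\[
e_{n+1}=\ee^{\tau A}\bigl(e_n+\tau(f(u_n)-f(\wt u_n))\bigr)+\delta_{n+1}+\mathcal{O}(\tau^2),
\]
with the one caveat that $q_n$ is now solution dependent, so the difference $\mathcal{L}_\tau u_n-\mathcal{L}_\tau\wt u_n$ also carries the contribution of $q_n$ evaluated at $u_n$ versus at $u(t_n)$. By \eqref{eq:lie1} and \eqref{eq:lie2} this contribution enters only under the factor $\tau$ or under the integral over $[0,\tau]$, and since the map $u\mapsto q_n$ is Lipschitz in the $L^p$-norm (its boundary data \eqref{eq:bd-choice} depends smoothly on $u$ and the extension is bounded), it is of size $C\tau\|e_n\|$ and is absorbed into the stability term. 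Combining the boundedness of $\ee^{\tau A}$, the local Lipschitz continuity of $f$, and $\|\delta_{n+1}\|\le C\tau^2$ yields $\|e_{n+1}\|\le(1+C\tau)\|e_n\|+C\tau^2$, whence discrete Gronwall and $\|e_0\|=0$ give $\|e_n\|\le C\tau$ for $n\tau\le T$.

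I expect the main obstacle to lie not in the algebra but in making the membership $q_n(0)-f(u(t_n))\in\mathcal{D}(A)$ fully rigorous: one must ensure the chosen continuation $q_n$ is genuinely of class $W^{2,p}$ rather than merely matching the boundary values formally, and one must control the solution-dependence term in the stability estimate uniformly in $n$. Both rely on the smoothness supplied by Assumption~\ref{ass1} and the regularity of the exact solution from \cite{lunardi95}, so the parabolic smoothing \eqref{eq:para-smooth} that was needed for Theorem~\ref{thm:lie} no longer has to compensate an unbounded defect.
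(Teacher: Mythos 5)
Your proposal is correct and follows essentially the same route as the paper, which derives Theorem~\ref{thm:lie-corr} directly from the local-error representation with $\gamma=1$ in \eqref{eq:bd-relax} (i.e., the defect $q_n(0)-f(u(t_n))\in\mathcal D(A)$) and then reuses the global recursion and Gronwall argument of Theorem~\ref{thm:lie}. You merely spell out details the paper leaves implicit, namely the verification that $Bf(u)|_{\partial\Omega}=\alpha f(u)+f'(u)(b-\alpha u)|_{\partial\Omega}$ and the treatment of the solution dependence of $q_n$ in the stability estimate.
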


\subsection{Strang and modified Strang splitting}
\label{sec:strang}

In order to study the convergence properties of Strang splitting and its modifications, we again first analyze its local error when applied to \eqref{eq:pde2}. For this purpose, we consider one step of the numerical solution, starting at time $t_n$ with the initial value $\wt u_n = u(t_n)$ on the exact solution. The first half step of the modified Strang splitting (see \eqref{eq:split-v}) is then given by
\begin{equation}\label{eq:strang1}
\begin{aligned}
v_n(\tfrac{\tau}2) &= z_n(\tfrac{\tau}2) + \ee^{\frac{\tau}2 A}\bigl(\wt u_n - z_n(0)\bigr) \\
&\qquad + \int_0^{\frac{\tau}2} \ee^{(\frac{\tau}2-s) A}\Bigl(q_n(s) + Dz_n(s) -\partial_t z_n(s)\Bigr) \dd s,
\end{aligned}
\end{equation}
where $z_n$ denotes a sufficiently smooth function that satisfies the boundary condition $Bz_n(0)|_{\partial\Omega}=b_n(0)$. We use here the same function $z_n$ as in \eqref{eq:exact-sol}. The subsequent full step with the vector field \eqref{eq:split-w} then has the representation
\begin{equation}\label{eq:strang2}
\begin{aligned}
w_n(s) &= v_n(\tfrac{\tau}2) +  t \bigl(f(w_n(\tfrac{s}2)) - q_n(\tfrac{s}2)\bigr)+ \mathcal O (s^3),\qquad 0\le s \le \tau.
\end{aligned}
\end{equation}
Note that we have employed a symmetric expansion based on the midpoint of the interval $[0,s]$. Therefore, the $\mathcal O (s^2)$ term does not show up in the expansion. Carrying out again one half step of \eqref{eq:split-v} with starting value $w_n(\tau)$ provides one step of Strang splitting
\begin{equation}\label{eq:strang3}
\begin{aligned}
\mathcal S_h \wt u_n &= z_n(\tau) + \ee^{\frac{\tau}2 A}\bigl(w_n(\tau)-z_n(\tfrac{\tau}2)\bigr)\\
&\qquad + \int_0^{\frac{\tau}2} \ee^{(\frac{\tau}2-s) A}\Bigl(q_n(\tfrac{\tau}2+s) + Dz_n(\tfrac{\tau}2+s) -\partial_t z_n(\tfrac{\tau}2+s)\Bigr) \dd s.
\end{aligned}
\end{equation}
Inserting \eqref{eq:strang2} and \eqref{eq:strang1} into \eqref{eq:strang3} finally gives the following representation of the numerical solution
\begin{equation}\label{eq:st-local}
\begin{aligned}
\mathcal S_h \wt u_n &= z_n(\tau) + \ee^{\tau A}\bigl(\wt u_n-z_n(0)\bigr) + \tau\ee^{\tau A}\bigl(f(w_n(\tfrac{\tau}2))-q_n(\tfrac{\tau}2)\bigr)\\
&\qquad + \int_0^{\tau} \ee^{(\tau-s) A}\Bigl(q_n(s) + Dz_n(s) - \partial_t z_n(s))\Bigr) \dd s + \mathcal O(\tau^3).
\end{aligned}
\end{equation}
The local error is given by $\delta_{n+1} = \mathcal S_h \wt u_n  - u(t_{n+1})$. Taking the difference of the numerical \eqref{eq:st-local} and the exact solution \eqref{eq:exact-sol}, we get
\begin{equation}\label{eq:locerr-strang}
\begin{aligned}
\delta_{n+1} &= \tau\ee^{\frac{\tau}2 A}\bigl(f(w_n(\tfrac{\tau}2)) - f(u(t_n+\tfrac{\tau}2))\bigr)+ \tau\ee^{\frac{\tau}2 A}\bigl(f(u(t_n+\tfrac{\tau}2))- q_n(\tfrac{\tau}2)\bigr)\\
&\qquad + \int_0^{\tau} \ee^{(\tau-s) A}\Bigl(q_n(s) - f(u(t_n+s))\Bigr) \dd s + \mathcal O(\tau^3)\\
&=\tcone+\tctwo+\tcthree+ \mathcal O(\tau^3).
\end{aligned}
\end{equation}
This error representation is composed of three terms and a remainder. In order to bound the first term in \eqref{eq:locerr-strang}, we use the Lipschitz continuity of $f$ and the bound
\begin{equation}\label{eq:lie-rev}
\|w_n(\tfrac{\tau}2) - u(t_n+\tfrac{\tau}2) \| \le C \tau^2,
\end{equation}
which holds uniformly in $n$ and $\tau$ on compact time intervals (see Lemma~\ref{lem:lie-rev} below). This shows that \tcone\ is of size $\mathcal O(\tau^3)$.

The remaining terms \tctwo\ and \tcthree\ are treated together. Employing the Peano kernel representation of the error of the midpoint rule
\begin{equation}\label{eq:midpoint}
\int_0^\tau \psi(s)\,\dd s = \tau\psi(\tfrac{\tau}2) + \int_0^{\frac{\tau}2} \frac{s^2}2\psi''(s)\,\dd s + \int_{\frac{\tau}2}^\tau \frac{(\tau-s)^2}2\psi''(s)\,\dd s
\end{equation}
with
$$
\psi(s) = \ee^{(\tau-s) A}\wh \psi(s), \qquad \wh\psi(s)= q_n(s) - f(u(t_n+s))
$$
shows that we just have to estimate the two integral remainder terms on the right-hand side of \eqref{eq:midpoint}. The term $\psi''(s)$ consists of three different terms
$$
\psi''(s) = A\ee^{(\tau-s) A}A\wh\psi(s) - 2A\ee^{(\tau-s) A}\wh\psi'(s) + \ee^{(\tau-s) A}\wh\psi''(s).
$$
The leftmost $A$ in the first two terms can be compensated by parabolic smoothing in exactly the same way as for Lie splitting (see also the proof of Theorem~\ref{thm:strang} below). Therefore, the only term that requires attention is
\begin{equation}\label{eq:fractional-a}
\chi(s) = \ee^{(\tau-s) A}A\wh\psi(s) = \ee^{(\tau-s) A}A\bigl(\wh q_n(0) - f(\wt u_n)\bigr) + \rho_n.
\end{equation}
Due to the parabolic smoothing property, the remainder $\rho_n$ satisfies the bound
$$
\|\rho_n\| \le C\tau(\tau -s)^{-1}
$$
uniformly in $n$. This together with \eqref{eq:bd-comp} shows that $\chi$ is uniformly bounded, and proves the following representation of the local error
\begin{equation}\label{eq:st-corr-loc-err}
\delta_{n+1} =  A\wh\delta_{n+1} + \mathcal O(\tau^3), \qquad \wh\delta_{n+1} = \mathcal O(\tau^3).
\end{equation}

We will employ the following assumption on the data of~\eqref{eq:pde2}.

\begin{assumption}\label{ass2}
Let the domain $\Omega$, the differential operators $D$ and $B$, and the inhomogeneity $d$ satisfy the assumptions of section~\ref{sec:model}, let $f$ be twice differentiable, and assume that $u_0$ and $Du_0$ are smooth and satisfy the boundary conditions.
\end{assumption}

We are now in the position to state the convergence result for the modified Strang splitting.

\begin{theorem}[Convergence of the modified Strang splitting] \label{thm:strang}
Let $q_n$ satisfy \eqref{eq:bd-choice}. Then, under Assumption~\ref{ass2} the modified Strang splitting scheme is convergent of order $\tau^2 \left\vert \log \tau \right\vert$, i.e., the global error satisfies the bound
$$
\|u_n-u(t_n)\| \le C\tau^2 (1+\left\vert \log \tau \right\vert), \qquad 0\le n\tau \le T,
$$
where the constant $C$ depends on $T$ but is independent of $\tau$ and $n$.
\end{theorem}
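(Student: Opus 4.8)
The plan is to follow the blueprint of the classic Lie convergence proof (Theorem~\ref{thm:lie}), replacing the first-order local error estimate used there by the sharper representation \eqref{eq:st-corr-loc-err}. With $e_n=u_n-u(t_n)$ and $\wt u_n=u(t_n)$, one step gives $e_{n+1}=\mathcal S_\tau u_n-\mathcal S_\tau\wt u_n+\delta_{n+1}$. First I would expand the scheme difference by tracing $e_n$ through the three substeps of the Strang step. Since both partial flows \eqref{eq:split-v} and \eqref{eq:split-w} carry the same source terms $q_n$ and $z_n$, these cancel in the difference; the two half-flows of \eqref{eq:split-v} each contribute a factor $\ee^{\frac\tau2 A}$, while the variational equation of the middle flow \eqref{eq:split-w} contributes $I+\tau f'(\cdot)+\mathcal O(\tau^2)$. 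Collecting terms yields a recursion of the form
\[
e_{n+1}=\ee^{\tau A}e_n+\tau R_n e_n+\delta_{n+1},\qquad \|R_n e_n\|\le C\|e_n\|,
\]
where $R_n$ is uniformly bounded and, via a standard continuation argument, absorbs the terms that are of higher order in $e_n$.

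Unrolling this recursion with $e_0=0$ and using the uniform boundedness of the semigroup leads to
\[
\|e_n\|\le C\tau\sum_{k=0}^{n-1}\|e_k\|+\Bigl\|\textstyle\sum_{k=0}^{n-1}\ee^{(n-1-k)\tau A}\delta_{k+1}\Bigr\|.
\]
The first sum is dispatched by the discrete Gronwall lemma exactly as in the Lie case, so the heart of the matter is the accumulated local error. Here I would insert $\delta_{k+1}=A\wh\delta_{k+1}+\mathcal O(\tau^3)$ with $\wh\delta_{k+1}=\mathcal O(\tau^3)$. The clean $\mathcal O(\tau^3)$ part is harmless: summing $n\le T/\tau$ uniformly bounded contributions costs one power of $\tau$ and leaves $\mathcal O(\tau^2)$. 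For the leading part $\sum_{k}\ee^{(n-1-k)\tau A}A\wh\delta_{k+1}$ and indices $k<n-1$, the outer semigroup absorbs the leftmost $A$ through parabolic smoothing \eqref{eq:para-smooth}, giving $\|\ee^{(n-1-k)\tau A}A\|\le C/((n-1-k)\tau)$; combined with $\|\wh\delta_{k+1}\|\le C\tau^3$ this produces the harmonic sum $C\tau^2\sum_{j=1}^{n-1}1/j\le C\tau^2(1+|\log\tau|)$, which is the origin of the logarithmic factor in the stated order.

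The single index $k=n-1$ carries no outer smoothing, and bounding this term is the main obstacle: one must show $\|A\wh\delta_n\|\le C\tau^2$ although $\wh\delta_n$ is only of norm $\mathcal O(\tau^3)$. To this end I would return to the Peano-kernel representation \eqref{eq:midpoint} underlying \eqref{eq:st-corr-loc-err} and apply the smoothing internally. In each remainder integral the problematic contribution is $A\chi(s)=A\ee^{(\tau-s)A}A\wh\psi(s)$; splitting the exponential one obtains $\|A\chi(s)\|\le C/(\tau-s)$, where the uniform boundedness of $\chi$ is exactly what the boundary compatibility \eqref{eq:bd-comp}, guaranteed by the choice \eqref{eq:bd-choice}, provides. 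The Peano weights $s^2$ and $(\tau-s)^2$ then compensate this singularity, since $\int_0^{\tau/2}\tfrac{s^2}{\tau-s}\,\dd s$ and $\int_{\tau/2}^\tau(\tau-s)\,\dd s$ are both $\mathcal O(\tau^2)$; the analogous terms involving $\wh\psi'$ are estimated in the same way. Assembling the $\mathcal O(\tau^2)$, the $\mathcal O(\tau^2(1+|\log\tau|))$ and the Gronwall contributions yields the asserted bound $\|e_n\|\le C\tau^2(1+|\log\tau|)$.
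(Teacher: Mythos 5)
Your proposal is correct and follows essentially the same route as the paper: the same stability recursion $e_{n+1}=\ee^{\tau A}\bigl(e_n+\tau E\bigr)+\delta_{n+1}$, the local error representation \eqref{eq:st-corr-loc-err}, parabolic smoothing of the accumulated $A\wh\delta_{k+1}$ terms producing the harmonic sum and hence the $\left|\log\tau\right|$ factor, and a discrete Gronwall argument, exactly as the paper does by transplanting the proof of Theorem~\ref{thm:lie}. Your explicit treatment of the final, unsmoothed term $k=n-1$ via the Peano weights is a detail the paper leaves implicit in ``proceed literally as in the proof of Theorem~\ref{thm:lie}'', and it is handled correctly.
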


Under the assumption
\begin{equation*}
q_n(0) - f(u(t_n)) \in\mathcal D((-A)^{1+\gamma}) \qquad \text{for some $0 <\gamma \le 1$},
\end{equation*}
which is slightly stronger than \eqref{eq:bd-choice}, the $\log \tau$ term in the above theorem can be omitted. This follows in the same way as the corresponding result for Lie splitting, discussed above. For the oblique boundary conditions considered in this paper, we can choose again any $\gamma <\tfrac1{2p}$, see \cite{grisvard1967}.

\def\myproofname{Theorem~\ref{thm:strang}}
\begin{proofof}
The global error $e_n = u_n - u(t_n)$ satisfies the recursion
\begin{equation}
e_{n+1} = \mathcal S_\tau u_n -\mathcal S_\tau \wt u_n + \delta_{n+1}
\end{equation}
which, by \eqref{eq:st-local} and \eqref{eq:strang2}, can be brought to the following form
\begin{equation}
e_{n+1}= \ee^{\tau A}\bigl( e_n + \tau E(u_n,\wt u_n)\bigr) + \delta_{n+1} + \mathcal O(\tau^2),
\end{equation}
where $E$ satisfies the bound $\|E(u_n,\wt u_n)\|\le C \|e_n\|$ uniformly in $n$. Using the representation \eqref{eq:st-corr-loc-err} of the local errors $\delta_{n+1}$, we can proceed from here on literally as in the proof of Theorem~\ref{thm:lie}.
\end{proofof}

The following Lemma was used for the proof of Theorem~\ref{thm:strang}. It also shows that modified Lie splitting, carried out with the flows interchanged, has the same order of convergence.
\begin{lemma}\label{lem:lie-rev}
Let $q_n$ satisfy \eqref{eq:bd-choice}. Then, under the assumptions of Theorem~\ref{thm:lie}, the following bound holds
\begin{equation}\label{eq:lie-rev-lem}
\left\|w_n(\tfrac{\tau}2) - u(t_n+\tfrac{\tau}2) \right\| \le C \tau^2
\end{equation}
with a constant $C$ that can be chosen independently of $n$ and $\tau$ on compact time intervals $0\le t_n=n\tau \le T$.
\end{lemma}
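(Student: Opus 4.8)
The plan is to recognize $w_n(\tfrac{\tau}2)$ as the outcome of a single step of length $\tfrac{\tau}2$ of the modified Lie splitting with the two partial flows interchanged: the diffusive half step \eqref{eq:split-v} first carries $\wt u_n = u(t_n)$ to $v_n(\tfrac{\tau}2)$, and then the reactive flow \eqref{eq:split-w} is run for time $\tfrac{\tau}2$ starting from $v_n(\tfrac{\tau}2)$. I would therefore perform a local error analysis entirely parallel to the one leading to \eqref{eq:loc-err-lie}, adapted to the reversed ordering and to the half step of size $\tfrac{\tau}2$, and then read off \eqref{eq:lie-rev-lem} as the resulting local error bound.

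Concretely, I would first write down the two constituent representations. The diffusive half step is already available as \eqref{eq:strang1}, while the reactive half step has the form $w_n(\tfrac{\tau}2) = v_n(\tfrac{\tau}2) + \int_0^{\tau/2}\bigl(f(w_n(\sigma)) - q_n(\sigma)\bigr)\,\dd\sigma$. Subtracting the exact solution \eqref{eq:exact-sol} with $\tau$ replaced by $\tfrac{\tau}2$, the inhomogeneous terms involving $z_n$ and the semigroup acting on the common initial value $u(t_n)$ cancel, and one is left with $\int_0^{\tau/2}\ee^{(\frac{\tau}2-s)A}g(s)\,\dd s + \int_0^{\tau/2}\bigl(f(w_n(\sigma)) - q_n(\sigma)\bigr)\,\dd\sigma$, where I abbreviate $g(s) = q_n(s) - f(u(t_n+s))$.

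Next I would Taylor expand both integrands to first order in their argument, which is legitimate because $f$ is differentiable and $u$ is continuously differentiable under the assumptions of Theorem~\ref{thm:lie}. Using $v_n(\tfrac{\tau}2) = u(t_n) + \mathcal O(\tau)$, which follows since $u(t_n) - z_n(0) \in \mathcal D(A)$ (it lies in $W^{2,p}$ and satisfies homogeneous boundary conditions, with $\|A(u(t_n)-z_n(0))\|$ bounded uniformly on $[0,T]$), the reactive integral equals $-\tfrac{\tau}2 g(0) + \mathcal O(\tau^2)$, while the diffusive integral equals $\tfrac{\tau}2\varphi_1(\tfrac{\tau}2 A)g(0) + \mathcal O(\tau^2)$. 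Collecting terms yields $w_n(\tfrac{\tau}2) - u(t_n+\tfrac{\tau}2) = \tfrac{\tau}2\bigl(\varphi_1(\tfrac{\tau}2 A) - I\bigr)g(0) + \mathcal O(\tau^2)$, where $g(0) = q_n(0) - f(u(t_n))$.

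The decisive point, and the one I expect to be the main obstacle, is to absorb the unbounded operator hidden in $\varphi_1(\tfrac{\tau}2 A) - I$ applied to $g(0)$. Here the boundary compatibility of the correction enters: by the choice \eqref{eq:bd-choice} we have $Bg(0)|_{\partial\Omega} = \mathcal O(\tau)$ as in \eqref{eq:bd-comp}, and $g(0) \in W^{2,p}(\Omega)$ by smoothness. I would split $g(0) = (g(0) - r) + r$, where $r$ is a smooth lift of the boundary defect with $Br|_{\partial\Omega} = Bg(0)|_{\partial\Omega}$ and $\|r\| \le C\tau$, so that $g(0) - r \in \mathcal D(A)$ with $\|A(g(0)-r)\|$ bounded uniformly in $n$ and $\tau$ along the smooth exact solution. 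For the defect I keep the form $\tfrac{\tau}2(\varphi_1(\tfrac{\tau}2 A) - I)r$ and use the uniform boundedness of $\varphi_1$ to bound it by $C\tau\|r\| \le C\tau^2$; for the regular part I use $\varphi_1(z) - 1 = z\varphi_2(z)$ to write $\tfrac{\tau}2(\varphi_1(\tfrac{\tau}2 A) - I)(g(0)-r) = \tfrac{\tau^2}4\varphi_2(\tfrac{\tau}2 A)A(g(0)-r)$, which boundedness of $\varphi_2$ estimates by $C\tau^2\|A(g(0)-r)\|$. Combining the two estimates establishes \eqref{eq:lie-rev-lem} with a constant uniform in $n$ and $\tau$ on $[0,T]$. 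Finally, I would note that this same local error computation, inserted into the global error recursion and summed exactly as in the proof of Theorem~\ref{thm:lie}, shows that the flow-interchanged modified Lie splitting converges with the same order as its original version.
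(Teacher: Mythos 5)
Your argument is correct and rests on exactly the same two ingredients as the paper's proof: the bound $v_n(\tfrac{\tau}2)-\wt u_n=\mathcal O(\tau)$ via $z_n(0)-\wt u_n\in\mathcal D(A)$, and the boundary compatibility of $q_n(0)-f(u(t_n))$ to neutralize the unbounded operator. The only (harmless) difference is bookkeeping: the paper keeps the term $\tfrac{\tau}2\bigl(f(w_n(0))-f(\wt u_n)\bigr)$ and a double integral with integrand $\ee^{(\frac{\tau}2-\xi)A}\bigl(A\wh\psi(\xi)+\wh\psi'(\xi)\bigr)$, whereas you collect the leading contribution into $\tfrac{\tau}2\bigl(\varphi_1(\tfrac{\tau}2 A)-I\bigr)g(0)=\tfrac{\tau^2}4\varphi_2(\tfrac{\tau}2 A)Ag(0)$ as in the Lie local-error computation, which has the minor advantage of needing the compatibility only at $\xi=0$ and of making the $\mathcal O(\tau)$ boundary defect permitted by \eqref{eq:bd-comp} explicit.
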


\begin{proof}
From~\eqref{eq:strang2} with $t=\tfrac{\tau}2$ we infer that
$$
w_n(\tfrac{\tau}2) = v_n(\tfrac{\tau}2) + \tfrac{\tau}2\bigl(f(w_n(0))-q_n(0)\bigr) + \mathcal O(\tau^2)
$$
with $v_n(\tfrac{\tau}2)$ given by~\eqref{eq:strang1}.
Using again the notation $\wh\psi(s) = q_n(s) - f(u(t_n+s))$, we have
\begin{equation*}
\begin{aligned}
w_n(\tfrac{\tau}2) - u(t_n+\tfrac{\tau}2) &= \tfrac{\tau}2\bigl(f(w_n(0))-q_n(0)\bigr) + \int_0^{\frac{\tau}2}\ee^{(\frac{\tau}2-s)A}\wh\psi(s)\,\dd s\\
&=\tfrac{\tau}2\bigl(f(w_n(0))-f(\wt u_n)\bigr) + \int_0^{\frac{\tau}2}\!\!\int_0^s \ee^{(\frac{\tau}2-\xi)A}(A\wh\psi(\xi)+\wh\psi'(\xi))\,\dd\xi\,\dd s.
\end{aligned}
\end{equation*}
Since $q_n$ satisfies the boundary conditions of $f(\wt u_n)$, the above integrand fulfills the bound
\begin{equation}\label{eq:fractional-b}
\|\ee^{(\frac{\tau}2-\xi)A}(A\wh\psi(\xi)+\wh\psi'(\xi))\| \le C
\end{equation}
and the double integral is appropriately bounded. Further, since $f$ is locally Lipschitz continuous, it remains to estimate the difference
\begin{align*}
w_n(0)-\wt u_n &= v_n(\tfrac{\tau}2)-\wt u_n\\
&= z_n(\tfrac{\tau}2) + \ee^{\frac{\tau}2 A}\bigl( \wt u_n - z_n(0)\bigr) -\wt u_n \\
&= \tfrac{\tau}2 A  \,\varphi_1(\tfrac{\tau}2 A) \bigl( z_n(0) - \wt u_n \bigr) +\mathcal O(\tau).
\end{align*}
But both, $z_n(0)$ and $\wt u_n$ satisfy the boundary conditions. Therefore, their difference $z_n(0)-\wt u_n$ lies in the domain of $A$. This gives at once the required bound~\eqref{eq:lie-rev-lem}.
\end{proof}

\subsection{Explanation of the encountered fractional orders of convergence}
\label{sec:explain}

\begin{table}[bt]
\caption{Expected orders of convergence for classic Strang splitting in various norms.}
\label{tab:orders}
\begin{center}
\renewcommand{\arraystretch}{1.15}
\setlength{\tabcolsep}{2mm}
\begin{tabular}{c|ccc}
boundary type & $L^1$ & $L^2$ & $L^{\infty}$ \\
\hline
$\beta_1 = \ldots = \beta_d=0$ & 1.50 & 1.25 & 1.00\\
$\exists j \text{ with } \beta_j \ne 0$ & 2.00 & 1.75 & 1.50\\
\end{tabular}
\end{center}
\end{table}

We are now in the position to explain the order reduction which is encountered in our experiments with the classic Strang scheme. Under the relaxed assumption \eqref{eq:bd-relax}, we get instead of \eqref{eq:fractional-b} the weaker bound
\begin{equation}
\|\ee^{(\frac{\tau}2-\xi)A}(A\wh\psi(\xi)+\psi'(\xi))\| \le C(\tfrac{\tau}2-\xi)^{\gamma-1}.
\end{equation}
This is again achieved by employing the parabolic smoothing property~\eqref{eq:para-smooth}. Consequently, we get
$$
\int_0^{\frac{\tau}2}\!\!\int_0^s \ee^{(\frac{\tau}2-\xi)A}(A\wh\psi(\xi)+\psi'(\xi))\,\dd\xi\,\dd s = C \tau^{\gamma+1},
$$
which shows that the assertion of Lemma~\ref{lem:lie-rev} still holds under the assumption \eqref{eq:bd-relax} with the weaker bound
$$
\left\|w_n(\tfrac{\tau}2) - u(t_n+\tfrac{\tau}2) \right\| \le C \tau^{\gamma+1}.
$$
Next, we rewrite~\eqref{eq:fractional-a} as
\begin{equation}
\chi(s) = \ee^{(\tau-s) A}(-A)^{1-\gamma} \cdot (-A)^\gamma\bigl(\wh q_n(0) - f(\wt u_n)\bigr) + \rho_n
\end{equation}
and use again~\eqref{eq:para-smooth}. This gives the bound
$$
\|\chi(s)\| \le C(\tau -s)^{\gamma -1} + C\tau(\tau-s)^{-1}.
$$
Inserting this into \eqref{eq:midpoint} shows that
\begin{equation}
\delta_{n+1} =  A \wh \delta_{n+1} + \mathcal O(\tau^3),\qquad  \wh\delta_{n+1} = \mathcal O(\tau^{\gamma +2}).
\end{equation}
The classic Strang splitting under assumption \eqref{eq:bd-relax} thus converges with order $1+\gamma$. The value of $\gamma$ depends on the type of boundary condition and the chosen norm in which the error is measured. Table~\ref{tab:orders} gives some typical values, see \cite{fujiwara1967, grisvard1967}.

\section{Numerical results in one space dimension}
\label{sec:1d}

In this section we will present a number of numerical results for the diffusion-reaction problem \eqref{eq:problem-orig} with $f(u)=u^2$ on $\Omega=[0,1]$. The Laplacian is discretized by classical centered second-order finite differences, and we use the CVODE library \cite{cvode} to compute the partial flows given by \eqref{eq:split}. In all the simulations we will refer to the classical splitting approach by Lie and Strang, respectively, while we refer to the schemes introduced in section~\ref{sec:method} by Lie (modified) and Strang (modified), respectively.

\begin{example}[Neumann boundary conditions]
For this problem we prescribe inhomogeneous Neumann boundary conditions ($\alpha=1$, $\beta=\beta_1=0$) with $b_0=0$ and $b_1=1$. From the boundary conditions an admissible correction in accordance with condition \eqref{eq:bd-choice} is easily determined. It does not depend on time and is given by $q_n(s,x)=x^2 u_n(1)$. As initial condition we have chosen $u_0(x)=-2/\pi \cos(\tfrac{1}{2}\pi x)$. The numerical results are shown in Table \ref{fig:tab-1d-neumann}. We observe the expected reduction to order $1.5$ (infinity norm) and order $1.75$ (discrete $L^2$ norm) for the classic Strang splitting scheme. On the other hand, the modified Strang splitting scheme is seen to be second order convergent. In addition, the error constants are improved by approximately a factor of two in case of the modified Lie splitting.
\begin{table}
\caption{Diffusion-reaction equation with inhomogeneous Neumann boundary conditions and $500$ grid points. The error in the discrete infinity and $L^2$ norm, respectively, is computed at $t=0.5$ by comparing the numerical solution to a reference solution computed with the modified Strang splitting using $\tau=1\cdot10^{-4}$.
\label{fig:tab-1d-neumann}}
\centering\begin{tabular}{rrrrrrrrr}
& \multicolumn{2}{c}{Lie} && \multicolumn{2}{c}{Strang} && \multicolumn{2}{c}{Strang} \\
\cline{2-3} \cline{5-6}  \cline{8-9}
\multicolumn{1}{l}{\msp step size} & \multicolumn{1}{l}{$l^{\infty}$ error} & \multicolumn{1}{l}{order} & &
\multicolumn{1}{l}{$l^{\infty}$ error} & \multicolumn{1}{l}{order} & &
\multicolumn{1}{l}{$l^{2}$ error} & \multicolumn{1}{l}{order}\\
\hline
\msp
3.125e-02  & 2.951e-01  & --          & & 1.806e-01  & --     & & 8.733e-02  & --          \\
1.562e-02  & 1.741e-01  & 0.76       & & 2.211e-04  & 9.67   & & 2.130e-05  & 12.00       \\
7.812e-03  & 1.237e-03  & 7.14       & & 7.684e-05  & 1.52   & & 6.364e-06  & 1.74        \\
3.906e-03  & 6.315e-04  & 0.97       & & 2.638e-05  & 1.54   & & 1.895e-06  & 1.75        \\
1.953e-03  & 3.202e-04  & 0.98       & & 8.897e-06  & 1.57   & & 5.612e-07  & 1.76        \\
\end{tabular}

\vspace{2mm}
\begin{tabular}{rrrrrrrrr}
& \multicolumn{2}{c}{Lie (modified)} && \multicolumn{2}{c}{Strang (modified)} && \multicolumn{2}{c}{Strang (modified)} \\
\cline{2-3} \cline{5-6}  \cline{8-9}
\multicolumn{1}{l}{\msp step size} & \multicolumn{1}{l}{$l^{\infty}$ error} & \multicolumn{1}{l}{order} & &
\multicolumn{1}{l}{$l^{\infty}$ error} & \multicolumn{1}{l}{order} & &
\multicolumn{1}{l}{$l^{2}$ error} & \multicolumn{1}{l}{order}\\
\hline
\msp
3.125e-02  & 1.381e-01  & --          & & 8.752e-02  & --      & & 5.471e-02  & --         \\
1.562e-02  & 8.962e-02  & 0.62       & & 1.495e-05  & 12.51   & & 3.931e-06  & 13.76      \\
7.812e-03  & 7.543e-04  & 6.89       & & 3.868e-06  & 1.95    & & 9.773e-07  & 2.01       \\
3.906e-03  & 3.788e-04  & 0.99       & & 1.002e-06  & 1.95    & & 2.428e-07  & 2.01       \\
1.953e-03  & 1.898e-04  & 1.00       & & 2.603e-07  & 1.94    & & 6.023e-08  & 2.01       \\
\end{tabular}

\end{table}
\end{example}

\begin{example}[Mixed boundary conditions]
For this problem we prescribe Dirichlet boundary conditions on the left boundary (with $b_0=1$) and Neumann on the right boundary (with $b_1=1$). From these boundary conditions the 	correction is easily determined to satisfy $q_n(s,x)=1+2xu_n(1)$. As initial condition we have chosen $u_0(x)=1+\tfrac{2}{\pi}-\tfrac{2}{\pi} \cos(\tfrac{1}{2}\pi x)$. The numerical results are shown in Table \ref{table-1d-mixed}. We observe the expected reduction to order approximately $1$ (infinity norm) and order $1.25$ (discrete $L^2$ norm) for the classic Strang splitting scheme. On the other hand, the modified Strang splitting scheme is seen to be second order convergent with greatly improved accuracy even for large time step sizes. In addition, the error constants are improved by a factor of more than three in case of the modified Lie splitting.
\begin{table}
\caption{Diffusion-reaction equation with mixed Dirichlet/Neumann boundary conditions and $500$ grid points. The error in the discrete infinity and $L^2$ norm, respectively, is computed at $t=0.2$ by comparing the numerical solution to a reference solution computed with the modified Strang splitting using $\tau=5\cdot10^{-5}$.
\label{table-1d-mixed}}
\centering\begin{tabular}{rrrrrrrrr}
& \multicolumn{2}{c}{Lie} && \multicolumn{2}{c}{Strang} && \multicolumn{2}{c}{Strang}  \\
\cline{2-3} \cline{5-6} \cline{8-9}
\multicolumn{1}{l}{\msp step size} & \multicolumn{1}{l}{$l^{\infty}$ error} & \multicolumn{1}{l}{order} & &
\multicolumn{1}{l}{$l^{\infty}$ error} & \multicolumn{1}{l}{order} 
& & \multicolumn{1}{l}{$l^{2}$ error} & \multicolumn{1}{l}{order} \\
\hline\msp
1.250e-02  & 1.185e+00  & --          & & 5.718e-03  & --      & & 5.815e-04  & --         \\
6.250e-03  & 2.745e-03  & 8.75       & & 2.736e-03  & 1.06    & & 2.379e-04  & 1.29       \\
3.125e-03  & 1.430e-03  & 0.94       & & 1.288e-03  & 1.09    & & 9.652e-05  & 1.30       \\
1.563e-03  & 7.356e-04  & 0.96       & & 5.904e-04  & 1.13    & & 3.855e-05  & 1.32       \\
7.813e-04  & 3.750e-04  & 0.97       & & 2.596e-04  & 1.19    & & 1.499e-05  & 1.36       \\

\end{tabular}

\vspace{2mm}
\begin{tabular}{rrrrrrrrr}
& \multicolumn{2}{c}{Lie (modified)} && \multicolumn{2}{c}{Strang (modified)} && \multicolumn{2}{c}{Strang (modified)} \\
\cline{2-3} \cline{5-6} \cline{8-9}

\multicolumn{1}{l}{\msp step size} & \multicolumn{1}{l}{$l^{\infty}$ error} & \multicolumn{1}{l}{order} & &
\multicolumn{1}{l}{$l^{\infty}$ error} & \multicolumn{1}{l}{order} & &
\multicolumn{1}{l}{$l^{2}$ error} & \multicolumn{1}{l}{order} \\
\hline\msp
1.250e-02  & 3.618e-01  & --          & & 8.222e-05  & --       & & 2.567e-05  & --        \\
6.250e-03  & 9.110e-03  & 5.31       & & 2.087e-05  & 1.98     & & 6.426e-06  & 2.00      \\
3.125e-03  & 4.579e-03  & 0.99       & & 5.292e-06  & 1.98     & & 1.609e-06  & 2.00      \\
1.563e-03  & 2.295e-03  & 1.00       & & 1.341e-06  & 1.98     & & 4.031e-07  & 2.00      \\
7.813e-04  & 1.149e-03  & 1.00       & & 3.395e-07  & 1.98     & & 1.009e-07  & 2.00      \\

\end{tabular}

\end{table}
\end{example}

\begin{example}[Robin boundary condition]
For this problem we prescribe Robin boundary conditions ($\alpha=\beta=\beta_1=1$) and $b_0=0$, $b_1=1+2/\pi$. The correction, determined accordance with \eqref{eq:bd-choice}, satisfies $q_n(s,x)=\gamma_0 + \gamma_1 x$, where
$$
\gamma_0 = -2(1+\tfrac2{\pi})u_n(1)+u^2_n(1)-2u^2_n(0), \quad \gamma_1 = 2(1+\tfrac2{\pi})u_n(1)+u^2_n(0)-u^2_n(1).
$$
As initial condition we have chosen $u_0(x)=-2/\pi \cos(\tfrac{1}{2}\pi x)+2/\pi$. The numerical results are shown in Table \ref{table-1d-robin}. Since Robin boundary conditions behave similarly as Neumann boundary conditions we expect a reduction to order $1.5$ (infinity norm) and order $1.75$ (discrete $L^2$ norm), respectively. This is exactly what we observe for the classic Strang splitting. On the other hand, the modified Strang splitting is second order accurate. In addition, the error constants are improved by approximately a factor of two in case of the modified Lie splitting.
\begin{table}
\caption{Diffusion-reaction equation with Robin boundary conditions and $500$ grid points. The error in the discrete infinity and $L^2$ norm, respectively, is computed at $t=0.25$ by comparing the numerical solution to a reference solution computed with the modified Strang splitting using $\tau=5\cdot10^{-5}$.
\label{table-1d-robin}}
\centering\begin{tabular}{rrrrrrrrr}
& \multicolumn{2}{c}{Lie} && \multicolumn{2}{c}{Strang} && \multicolumn{2}{c}{Strang}  \\
\cline{2-3} \cline{5-6} \cline{8-9}
\multicolumn{1}{l}{\msp step size} & \multicolumn{1}{l}{$l^{\infty}$ error} & \multicolumn{1}{l}{order} & &
\multicolumn{1}{l}{$l^{\infty}$ error} & \multicolumn{1}{l}{order} & &
\multicolumn{1}{l}{$l^{2}$ error} & \multicolumn{1}{l}{order} \\
\hline\msp
1.562e-02  & 5.618e-02  & --         & & 2.388e-04  & --     & & 2.294e-05  & --          \\
7.812e-03  & 9.115e-04  & 5.95       & & 8.365e-05  & 1.51    & & 6.913e-06  & 1.73        \\
3.906e-03  & 4.690e-04  & 0.96       & & 2.890e-05  & 1.53    & & 2.072e-06  & 1.74        \\
1.953e-03  & 2.393e-04  & 0.97       & & 9.815e-06  & 1.56    & & 6.174e-07  & 1.75        \\
9.766e-04  & 1.213e-04  & 0.98       & & 3.254e-06  & 1.59    & & 1.826e-07  & 1.76        \\
\end{tabular}

\vspace{2mm}
\begin{tabular}{rrrrrrrrr}
& \multicolumn{2}{c}{Lie (modified)} && \multicolumn{2}{c}{Strang (modified)} && \multicolumn{2}{c}{Strang (modified)} \\
\cline{2-3} \cline{5-6} \cline{8-9}
\multicolumn{1}{l}{\msp step size} & \multicolumn{1}{l}{$l^{\infty}$ error} & \multicolumn{1}{l}{order} & &
\multicolumn{1}{l}{$l^{\infty}$ error} & \multicolumn{1}{l}{order} & &
\multicolumn{1}{l}{$l^{2}$ error} & \multicolumn{1}{l}{order} \\
\hline\msp
1.562e-02  & 3.232e-01  & --          & & 7.388e-06  & --      & & 4.741e-06  & --        \\
7.812e-03  & 4.716e-04  & 9.42       & & 1.865e-06  & 1.99    & & 1.200e-06  & 1.98      \\
3.906e-03  & 2.370e-04  & 0.99       & & 4.711e-07  & 1.99    & & 3.040e-07  & 1.98      \\
1.953e-03  & 1.188e-04  & 1.00       & & 1.192e-07  & 1.98    & & 7.717e-08  & 1.98      \\
9.766e-04  & 5.947e-05  & 1.00       & & 3.024e-08  & 1.98    & & 1.969e-08  & 1.97      \\
\end{tabular}

\end{table}
\end{example}

\section{Numerical results in two space dimensions}
\label{sec:2d}

\begin{table}
\caption{Two-dimensional diffusion-reaction equation with mixed Dirichlet/Neumann boundary conditions and $10^4$ quadrilateral finite elements. The error in the discrete infinity norm is computed at $t=0.1$ by comparing the numerical solution to a reference solution computed with the modified Strang splitting using $\tau=1.5\cdot10^{-3}$
\label{table-2d-mixed}}
\centering\begin{tabular}{rrrrrrrrr}
& \multicolumn{2}{c}{Strang} && \multicolumn{2}{c}{Strang (Dirichlet)} && \multicolumn{2}{c}{Strang (modified)} \\
\cline{2-3} \cline{5-6} \cline{8-9}
\multicolumn{1}{l}{\msp step size} & \multicolumn{1}{l}{$l^{\infty}$ error} & \multicolumn{1}{l}{order} & &
\multicolumn{1}{l}{$l^{\infty}$ error} & \multicolumn{1}{l}{order} & &
\multicolumn{1}{l}{$l^{\infty}$ error} & \multicolumn{1}{l}{order} \\
\hline\msp
0.1    & 6.493e-01& --     & & 1.203e-01& --      & & 1.024e-01& --\\
0.05   & 2.906e-01& 1.16& &   4.295e-02& 1.49 & &   2.568e-02& 2.00\\
0.025  & 1.345e-01& 1.11& &   1.639e-02& 1.39 & &   5.445e-03& 2.24\\
0.0125 & 6.147e-02& 1.13& &   5.455e-03& 1.59 & &   1.346e-03& 2.02\\
\end{tabular}

%
%
%

\end{table}

In this section we will consider the diffusion-reaction problem \eqref{eq:problem-orig} with $f(u)=u^2$ on $\Omega=[0,1]\times[0,1]$. The initial value is given by
$$
u_0(x)= 3+\mathrm{e}^{-5(y-\frac{1}{2})^2} \cos(2 \pi (x+y)).
$$
We impose Dirichlet boundary conditions on the top and bottom, and Neumann boundary conditions on the left and right boundary. The value of the boundary data $b$ is determined in such a manner that the initial condition $u_0$ is consistent with the prescribed boundary data. The correction $q_n$ is computed by solving Laplace's equation with these boundary conditions. The space discretization is performed with the libMesh software package \cite{libMeshPaper} and quadrilateral finite elements of second order are used.

The numerical results are shown in Table \ref{table-2d-mixed}. We observe order reduction to approximately order $1$ (in the infinity norm) for the classic Strang splitting. On the other hand, second order accuracy is observed for the modified Strang splitting. Let us also emphasize that even for large time step sizes (i.e.~a low tolerance) the modified Strang splitting scheme is more accurate by approximately one order of magnitude compared to the classic Strang splitting scheme.

Furthermore, we have displayed in Table~\ref{table-2d-mixed} the result of only applying the correction developed in \cite{EO15overcoming}. In this case the mixed nature of the boundary condition is ignored and the whole boundary is treated as if Dirichlet boundary conditions had been imposed. In this case we observe an order reduction to approximately $1.5$. Thus, some improvement can be obtained with this simplified approach. However, performing the full correction (as described in this paper) clearly offers superior accuracy.

\section{Implementation}
\label{sec:implement}

\begin{table}
\caption{Diffusion-reaction equation with Dirichlet boundary conditions and $500$ grid points. The error in the discrete infinity norm is computed at $t=0.1$ with a step size $\tau=1.25\cdot10^{-3}$ by comparing the numerical solution to a reference solution computed with the modified Strang splitting using $\tau=5\cdot10^{-5}$.
\label{table-correction}}
\centering\begin{tabular}{llr}
	method & correction & $l^{\infty}$ error \\
	\hline\msp
	Lie & none & 2.31e-03 \\
	Lie (mod.) & harmonic    & 1.20e-03 \\
	Lie (mod.) & $\sin \pi x$ & 2.43e-03 \\
	Lie (mod.) & $\sin 10 \pi x $ & 3.21e-03
\end{tabular} \hfill
\begin{tabular}{llr}
	method & correction & $l^{\infty}$ error \\
	\hline\msp
	Strang & none & 1.84e-03 \\
	Strang (mod.) & harmonic & 1.20e-06 \\
	Strang (mod.) &$\sin \pi x$ & 1.27e-06 \\
	Strang (mod.) &$\sin 10 \pi x$ & 7.02e-05
\end{tabular}

\end{table}

For oblique boundary conditions, an efficient implementation of the modified splitting schemes is more difficult to achieve than for the time independent Dirichlet boundary conditions that were the focus of \cite{EO15overcoming}. The reason for this difficulty is that the correction $q_n$ now has to be updated once at each time step and thus can not be precomputed. Note that this problem is shared by time dependent Dirichlet boundary conditions.

Therefore, in order to provide an efficient implementation of the numerical scheme described in this paper, we have to devise an efficient way to compute $q_n$. So far, this has been done exclusively by solving the corresponding elliptic problem. This is a seductive approach as it eliminates the term $D z$ from \eqref{eq:abstract}. However, as the convergence analysis carried out in section \ref{sec:error-analysis} shows, this is not the only possible choice.

Let us investigate this issue further. Assume that we have a correction $q_n$ at our disposal which satisfies the boundary conditions but is not harmonic (i.e., it does not solve the elliptic problem). From the theoretical analysis we would still expect the same order of convergence given that this correction is sufficiently smooth. Using numerical simulations we investigate the dependence of the accuracy on the correction used. In the numerical results presented in Table \ref{table-correction} we use Dirichlet boundary conditions with $b_0=1$ and $b_1=2$ and a harmonic correction (i.e.~$q_n=1+x$), a smooth correction ($q_n=1+x+\sin \pi x$) and an oscillatory correction ($q_n=1+x+\sin 10 \pi x$), respectively. We observe that the smooth correction gives results that are comparable to the harmonic correction, while the oscillatory correction results in a significant loss of accuracy. These results seem fairly robust both for Dirichlet as well as the more complicated boundary conditions considered in this paper.

We thus conclude that we can employ any correction as long as it satisfies the desired boundary conditions and is smooth enough. Therefore, we propose Algorithm~\ref{alg:qn} to compute $q_n$ (which is done at the beginning of each time step). The run time of this procedure is usually negligible compared to the effort required to solve the linear systems of equations in the parabolic substep of the splitting algorithm. Thus, for the vast majority of applications the run time of the modified Strang splitting and the classic Strang splitting are comparable and we are thus justified in comparing the accuracy of these schemes with an equal time step size.

\begin{algorithm}[t]
\caption{\quad Numerical algorithm for computing $q_n$\label{alg:qn}}
\begin{enumerate}
\item Using $u_n$ we initialize a function $\wh q$ that satisfies the boundary conditions given in \eqref{eq:bd-choice}. Note that the function $\wh q$ is not assumed to be differentiable (or even continuous). This can be accomplished in $\mathcal{O}(N)$ operations, where $N$ is the number of degrees of freedom, by performing a linear interpolation of the boundary data.
\item Apply a small number of weighted Jacobi iterations to $\wh q$ in order to obtain $q_n$. The weighted Jacobi iteration is chosen here as its purpose (as in a multigrid scheme) is to damp out the high frequencies and thus to produce a smoother solution. The slow convergence of the weighted Jacobi iteration is not an issue here since we only use it as a smoother and not to compute a solution of the elliptic problem.
\end{enumerate}
\end{algorithm}

\section{Conclusion}

We have introduced a modification of the Strang splitting scheme that succeeds in overcoming the order reduction observed for inhomogeneous Neumann, Robin, and mixed Dirichlet/Neumann boundary conditions. The present results are thus a generalization of our previous work \cite{EO15overcoming} where similar results have been obtained for Dirichlet boundary conditions. Thus, we have constructed a Strang splitting method that is provably convergent of order two even in the presence of non-trivial boundary conditions. In addition, a method to compute this modification efficiently is proposed which implies (together with the numerical results obtained in this paper) that significant gains in computational efficiency can be expected by employing this method to diffusion-reaction problems.


\bibliographystyle{plain}
\bibliography{papers}

\end{document}